\def\A{\mathcal{A}}
\def\B{\mathcal B}
\def\B{\mathcal B}
\def\amslatex{$\mathcal{A}\kern-.1667em\lower.5ex\hbox{$\mathcal{M}$}\kern-.125em\mathcal{S}$-\LaTeX}
\newtheorem{set}{set}[section]
\newtheorem{Corollary}[set]{Corollary}
\newtheorem{Example}[set]{Example}
\newtheorem{Lemma}[set]{Lemma}
\newtheorem{Remark}[set]{Remark}
\newtheorem{Theorem}[set]{Theorem}
\newcommand{\define}{\mathrel{\hbox{$\equiv$\hskip -.90em \lower .47ex \hbox{$\leftharpoondown$}}}}
\newcommand{\enifed}{\mathrel{\hbox{$\equiv$\hskip -.90em \lower .47ex \hbox{$\rightharpoondown$}}}}
\begin{document}
\title {\bf   On  Bi-free Multiplicative Convolution}
\author{Mingchu Gao}
\address{School of Mathematics and Information Science,
Baoji University of Arts and Sciences,
Baoji, Shaanxi 721013,
China; and
Department of Mathematics,
Louisiana College,
Pineville, LA 71359, USA}
\email{mingchu.gao@lacollege.edu}

\date{}
\begin{abstract}
In this paper, we study the partial bi-free  $S$-transform of a pair $(a,b)$ of random variables, and the $S$-transform of the   $2\times 2$ matrix-valued random variable $\left(\begin{matrix}a&0\\0&b\end{matrix}\right)$ associated with $(a,b)$ when restricted to upper triangular $2\times 2$ matrices. We first derive an explicit expression of  bi-free multiplicative convolution (of probability measures on the 2-dimensional torus $\mathbb{T}^2=\{(s,t)\in\mathbb{C}^2:|s|=1=|t|\}$, or on $\mathbb{R}^2_+$ in $\mathbb{C}^2$) from a subordination equation for bi-free multiplicative convolution.  We then show that, when $(a_1, b_1)$ and $(a_2,b_2)$ are bi-free,  the $S$-transforms of $X_1=\left(\begin{matrix}a_1&0\\0&b_1\end{matrix}\right)$, $X_2=\left(\begin{matrix}a_2&0\\0&b_2\end{matrix}\right)$ satisfy Dykema's twisted multiplicative equation for free operator-valued random variables if and only if at least one of the two partial bi-free  $S$-transforms of the pairs of random variables is the constant function 1 in a neighborhood of $(0,0)$.  This is the case if and only if one of the   two pairs, say $(a_1,b_1)$,  has factoring two-band moments (that is, $\varphi(a_1^mb_1^n)=\varphi(a_1^m)\varphi(b_1^n)$, for all $m,n=1, 2, \cdots$). We thus find a lot of bi-free pairs of random variables to which the $S$-transforms of the corresponding matrix-value random variables do not satisfy Dykema's twisted multiplicative formula. Finally, if both $(a_1,b_1)$ and $(a_2,b_2)$ have factoring two-band moments, we prove that the $\Psi$-transforms of $X_1$, $X_2$, and $X_1X_2$ satisfy a subordination equation.
\end{abstract}
\maketitle
{\bf AMS Mathematics Subject Classification (2010)} 46L54.

{\bf Key words and phrases} Bi-free Multiplicative Convolution, $S$-transforms, Analytic subordination.
\section*{Introduction}
Voiculescu introduced the concept of {\sl freeness for pairs of faces of random variables} in \cite{DV1}, initiating a new research field in free probability, {\sl bi-free probability}.   In his second paper on bi-free probability \cite{DV2}, Voiculescu provided a partial bi-free $R$-transform of a pair $(a,b)$ of random variables  to linearize bi-free additive convolution of compactly supported  probability measures on $\mathbb{R}^2$. The partial bi-free $R$-transform $R_{a,b}(z,w)$ is a formal power series in two complex variables  $z,w$ with bi-free cumulants being its coefficients (2.1 in \cite{DV2}).
Combining the  functional equation $$R_{a,b}(z,w)=1+zR_a(z)+wR_b(w)-\frac{zw}{G_{a,b}(G_{a}^{\langle-1\rangle}(z), G_{b}^{\langle-1\rangle}(w))}, $$ for $z,w$ near $0$, where $G_a^{\langle-1\rangle}(z)$ is the inverse function of $G_a(z)$ (Theorem 2.4 in \cite{DV2}),  with the additive property of $R_{a,b}(z,w)$ $$R_{a_1+a_2, b_1+b_2}(z,w)=R_{a_1, b_1}(z,w)+R_{a_2, b_2}(z,w), $$ for $|z|+|w|$ is sufficiently small, whenever $(a_1, b_1)$ and $(a_2, b_2)$ are bi-free,   the authors of  \cite{BBGS} got  a subordination equation for bi-free additive convolution$$\frac{1}{G_{a_1,b_1}(\omega_{a_1}(z),\omega_{b_1}(w))}+\frac{1}{G_{a_2,b_2}(\omega_{a_2}(z),\omega_{b_2}(w))}=\frac{1}{G_{a_1+a_2}(z)G_{b_1+b_2}(w)}+\frac{1}{G_{a_1+a_2, b_1+b_2}(z,w)}, \eqno (0.1)$$ for $z\in \mathbb{C}\setminus \sigma(a_1+a_2), w\in \mathbb{C}\setminus \sigma(b_1+b_2)$, as an equation of meromorphic functions ($(4)$ in \cite{BBGS}), where $\omega_{a_j}, \omega_{b_j}: \mathbb{C}^+\rightarrow \mathbb{C}^+$ are analytic functions such that $G_{a_1+a_2}(z)=G_{a_j}(\omega_{a_j}(z)),G_{b_1+b_2}(z)=G_{b_j}(\omega_{b_j}(z))$, for $z\in \mathbb{C}^+=\{z\in \mathbb{C}: \Im z>0\}$, and $j=1,2$. The functions $\omega_{a_j}, \omega_{b_j}$ are called subordination functions for free additive convolution.

Let $\Gamma=\left(\begin{matrix}z&\zeta\\0&w\end{matrix}\right)$ be an invertible upper triangular matrix in $M_2(\mathbb{C})$. For $a$ and $b$ in $(\A,\varphi)$, let $X$ be the matrix-valued random variable $\left(\begin{matrix}
                                                                                         a & 0 \\
                                                                                         0& b
                                                                                       \end{matrix}\right)$
in the matrix-valued non-commutative probability space $(M_2(\A), E, M_2(\mathbb{C}))$, where $E:M_2(\A)\rightarrow M_2(\mathbb{C})$ is the conditional expectation defined by $E((a_{ij})_{2\times 2})=(\varphi(a_{ij}))_{2\times 2}$, for $(a_{ij})_{2\times 2}\in M_2(\A)$. Lemma 3.1 in \cite{BBGS} states that if $(a_1, b_1)$ and $(a_2, b_2)$ are bi-free in $(\A,\varphi)$, then $$R_{X_1+X_2}(\Gamma)=R_{X_1}(\Gamma)+R_{X_2}(\Gamma),\eqno (0.2)$$ for all $z,w\in \mathbb{C}$ of sufficiently small absolute value, and all $\zeta\in \mathbb{C}$. The authors of \cite{BBGS} emphasized that this additive formula implies that the matrix-valued random variables $X_1=\left(\begin{matrix}a_1 & 0 \\0 & b_1\end{matrix}\right)$ and $X_2=\left(\begin{matrix}a_2 & 0 \\0 & b_2\end{matrix}\right)$
 ``mimic" freeness in terms of the relations between their analytic transforms when restricted to upper triangular matrices, which, furthermore, implies a subordination result for the $G$-transforms of $X_1$, $X_2$, and $X_1+X_2$ (Proposition 3.4 in \cite{BBGS}).

 In this paper, we study similar questions for bi-free multiplicative convolution. We derive an explicit expression for bi-free multiplicative convolution from a subordination equation for bi-free multiplicative convolution. The authors in \cite{BBGS} proved the additive equation for $R$-transforms of $X_1$ and $X_2$ ((0.2)), and provided a counterexample to show  that $X_1$ and $X_2$ are not free over $M_2(\mathbb{C})$ (Example 3.3 in \cite{BBGS}). An interesting question is whether the $S$-transforms of $X_1$ and $X_2$ satisfy Dykema's twisted multiplicative equation for free random variables with amalgamation (Theorem 1.1 in \cite{KD}) when restricted to upper triangular matrices. We answer this question completely by  providing  sufficient and necessary conditions for  the $S$-transforms of $X_1$ and $X_2$ to satisfy Dykema's equation.   Finally, we get a subordination formula for the $\Psi$-transforms of the matrix-valued random variables associated with bi-free  pairs of random variables in the case of factoring two-band moments.

The paper is organized as follows. Section 1 is devoted to the study of subordination properties of bi-free multiplicative convolution.  Using Voiculescu's multiplicative formula for partial bi-free $S$-transforms (Theorem 2.1 in \cite{DV3}), we derive a subordination equation for bi-free multiplicative convolution (Theorem 1.1), which could be regarded as a multiplicative analogue of above $(0.1)$.
 If  $a_ib_i=b_ia_i$, for $i=1,2$, then the joint distribution $\mu_i$ of $a_i, b_i$ is determined by its two-band moments $\{\varphi(a_i^mb_i^n):m,n=1, 2, \}$ for $i=1,2$. If $(a_1, b_1)$ and $(a_2, b_2)$ are bi-free and $a_ib_i=b_ia_i$, for $i=1,2$, then the distribution of the product pair $(a_1a_2, b_1b_2)$ is determined by its two-band moments.  In this case, we denote the distribution $\mu$ of $(a_1a_2, b_1b_2)$ by $\mu_1\boxtimes\boxtimes\mu_2$. If furthermore, $a_1, a_2, b_1, b_2$ are unitaries in a $C^*$-probability space $(\A, \varphi)$, Huang and Wang \cite{HW} pointed out that the $S$-transform $S_\mu(z,w)$ of a measure $\mu$ cannot determine the distribution $\mu$ uniquely, but its $\Psi$-transform can (discussions in Pages 8 and 11 in \cite{HW}).   We get a formula $(1.3)$  from Theorem 1.1 for calculating the $\Psi$-transform of the  bi-free multiplicative convolution  measure $\mu_1\boxtimes\boxtimes\mu_2$. We prove that $(1.3)$ can recover the distribution measure $\mu_1\boxtimes\boxtimes\mu_2$  (Remark 1.3).  In section 2, we study the $S$-transforms of $X_1$ and $X_2$ when restricted to upper triangular matrices in $M_2(\mathbb{C})$.  We prove that, when $(a_1, b_1)$ and $(a_2, b_2)$ are bi-free,
the $S$-transforms of $X_1$ and $X_2$  satisfy Dykema's twisted multiplicative equation for operator-valued free random variables if and only if at least one of the two partial bi-free $S$-transforms of the pairs of random variables is the constant function 1 in a neighborhood of $(0,0)$ (Theorem 2.3). By Remark 4.4 in \cite{PS1}, Proposition 4.2 in \cite{DV3}, or Remark 2.7 in \cite{PS2}, $S_{a,b}(z,w)=1$ when $z$ and $w$ are near $0$ if and only if $(a,b)$ has factoring two-band moments. Therefore, we get a lot of examples of bi-free pairs $(a_1, b_1)$ and $(a_2, b_2)$ such that $S_{X_1}$ and $S_{X_2}$ do not satisfy Dykema's equation (Example 2.6).  Finally, it is proved that $\Psi$-transforms of ${X_1X_2}, X_1, X_2$ satisfy a subordination equation, if both two pairs have factoring two-band moments (Theorem 2.7).

{\bf Acknowledgements} It is the author's pleasure to thank the referee for carefully reviewing the paper and providing many valuable corrections and suggestions, especially, simplifying the proof of Lemma 2.1.  The author would like thank Dr. Paul Skoufranis at York University in Toronto, Canada, for reminding the author his  work in \cite{PS1} and \cite{PS2}, and giving some helpful comments on the initial version of the paper.

\section{An explicit expression for bi-free multiplicative convolution}
 We shall derive a subordination equation for bi-free multiplicative convolution, from which an explicit formula for calculating bi-free multiplicative convolution is obtained.

  Let $(a,b)$ be a pair of random variables in a non-commutative probability space $(\A,\varphi)$. Voiculescu \cite{DV3} defined the following formal power series
$$G_a(z)=\sum_{n\ge 0}z^{-n-1}\varphi(a^n),\
G_{a,b}(z,w)=\sum_{m,n\ge 0}z^{-m-1}w^{-n-1}\varphi(a^mb^n),
$$
$$
h_a(z)=\sum_{n=0}^{\infty}\varphi(a^n)z^n, \Psi_a(z)=h_a(z)-1,\
H_{a,b}(z,w)=\sum_{m=0,n=0}^{\infty}\varphi(a^mb^n)z^mw^n.
$$
If $\varphi(a)\ne 0$, then $\Psi_a(z)$ has an inverse function $\Psi_a^{\langle-1\rangle}(z)$ near zero. The $S$-transform of $a$ is defined as $$S_a(z)=\frac{z+1}{z}\Psi_a^{\langle-1\rangle}(z).$$ The key property of $S_a(z)$ is that if $a_1$ and $a_2$ are free in $(\A, \varphi)$, and $\varphi(a_1)\varphi(a_2)\ne 0$, then $$S_{a_1a_2}(z)=S_{a_1}(z)S_{a_2}(z)$$ (\cite{DV4}). If $\varphi(a)\ne 0\ne \varphi(b)$, then the partial bi-free $S$-transform $S_{a,b}(z,w)$ of $(a,b)$ is defined as $$S_{a,b}(z,w)=\frac{z+1}{z}\frac{w+1}{w}\left(1-\frac{1+z+w}{H_{a,b}(\Psi_a^{\langle-1\rangle}(z),\Psi_b^{\langle-1\rangle}(w))}\right),$$ for $z,w\ne 0$, and $z,w$ near $0$ (Definition 2.1  in \cite{DV3}).
Huang and Wang \cite{HW} defined the following transforms (the original transforms were defined as integral transforms of measures on the $2$-dimensional torus $\mathbb{T}^2$)
$$\Psi_{a,b}(z,w)=\sum_{m=1, n=1}^{\infty}\varphi(a^mb^n)z^mw^n,  \eta_a(z)=\frac{\Psi_a(z)}{1+\Psi_a(z)}.$$
It is easy to see that $$H_{a,b}(z,w)=\Psi_{a,b}(z,w)+\Psi_a(z)+\Psi_b(w)+1$$ ($(2.5)$ in \cite{HW}). Assume that $\varphi(a)\ne 0\ne \varphi(b)$. Huang and Wang \cite{HW} defined the  $\Sigma$-transform of $(a,b)$
$$ \Sigma_{a,b}(z,w)=S_{a,b}\left(\frac{z}{1-z},\frac{w}{1-w}\right)=\frac{\Psi_{a,b}(\eta_a^{\langle-1\rangle}(z),\eta_b^{\langle-1\rangle}(w))}{zwH_{a,b}(\eta_a^{\langle-1\rangle}(z),\eta_b^{\langle-1\rangle}(w)))},$$ for $z,w\in \Omega_r:=D_r\cup \Delta_r$, where $\eta_a^{\langle-1\rangle}(z)$ is the inverse function of $\eta_a(z)$,  $D_r=\{z\in \mathbb{C}:|z|<r\}, \Delta_r=\{z\in \mathbb{C}: |z|>\frac{1}{r}\}$, for some $0<r<1$.

Our first result is a subordination equation for bi-free multiplicative convolution, which is a multiplicative analogue of  the subordination result $(0.1)$  for bi-free additive  convolution.
\begin{Theorem}Let $(a_1, b_1)$ and $(a_2, b_2)$ be bi-free pairs of unitaries (or non-zero non-negative elements) in a $C^*$-probability space $(\A, \varphi)$. If $\varphi(a_i)\ne 0\ne \varphi(b_i)$, for $i=1,2$,  then there are analytic functions $\omega_{a_i}, \omega_{b_i}:\mathbb{D}\rightarrow \mathbb{D}$ (or $\omega_{a_i}, \omega_{b_i}:\mathbb{C}\setminus \mathbb{R}^+\rightarrow \mathbb{C}\setminus \mathbb{R}^+$) such that
\begin{align*}&\frac{1}{\Psi_{a_1a_2,b_1b_2}(z,w)}+\frac{1}{1+\Psi_{a_1a_2}(z)+\Psi_{b_1b_2}(w)}\\
=&\frac{\frac{1}{1+\Psi_{a_1}(\omega_{a_1}(z))+\Psi_{b_1}(\omega_{b_1}(w))}+\frac{1}{\Psi_{a_1}(\omega_{a_1}(z))\Psi_{b_1}(\omega_{b_1}(w))}}{\left( \frac{(1+\Psi_{a_1}(\omega_1(z)))(1+\Psi_{b_1}(\omega_1(w)))}{\Psi_{a_1}(\omega_1(z))\Psi_{b_1}(\omega_{b1}(w))}\right)^2\frac{\Psi_{a_1, b_1}(\omega_{a_1}(z), \omega_{b_1}(w))}{H_{a_1,b_1}(\omega_{a_1}(z),\omega_{b_1}(w))}\frac{\Psi_{a_2, b_2}(\omega_{a_2}(z), \omega_{b_2}(w))}{H_{a_2,b_2}(\omega_{a_2}(z),\omega_{b_2}(w))}},
\end{align*}
for $z,w\in \mathbb{C}\setminus \mathbb{R}^+$ near zero, where $\mathbb{R}^+=\{x\in \mathbb{R}: x\ge 0\}$,  $\mathbb{D}$ is the open unit disk of $\mathbb{C}$.
\end{Theorem}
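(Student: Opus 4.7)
The plan is to deduce the stated subordination equation from Voiculescu's multiplicative formula for the partial bi-free $S$-transform (Theorem 2.1 in \cite{DV3}),
$$S_{a_1a_2,b_1b_2}(z,w)=S_{a_1,b_1}(z,w)\,S_{a_2,b_2}(z,w),$$
combined with Biane's classical subordination for free multiplicative convolution applied twice, once to the free pair $a_1,a_2$ and once to the free pair $b_1,b_2$ (freeness of these scalar pairs is automatic from bi-freeness of $(a_1,b_1)$ and $(a_2,b_2)$).

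The first step is to expand every $S$-transform appearing in this identity by means of its definition
$$S_{a,b}(z,w)=\frac{(z+1)(w+1)}{zw}\Bigl(1-\frac{1+z+w}{H_{a,b}(\Psi_a^{\langle-1\rangle}(z),\Psi_b^{\langle-1\rangle}(w))}\Bigr),$$
and then perform the change of variables $z\mapsto \Psi_{a_1a_2}(z)$, $w\mapsto \Psi_{b_1b_2}(w)$. Because $\varphi(a_i)\ne 0\ne \varphi(b_i)$ forces $\varphi(a_1a_2)\ne 0\ne \varphi(b_1b_2)$, the maps $\Psi_{a_1a_2}$ and $\Psi_{b_1b_2}$ are biholomorphisms near the origin, so this substitution is legitimate and it clears the inverse $\Psi$-transforms on the left. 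Setting $P:=\Psi_{a_1a_2}(z)$, $Q:=\Psi_{b_1b_2}(w)$ and using $H_{a,b}=\Psi_{a,b}+\Psi_a+\Psi_b+1$, the left-hand side collapses to $\frac{(P+1)(Q+1)}{PQ}\cdot \frac{\Psi_{a_1a_2,b_1b_2}(z,w)}{H_{a_1a_2,b_1b_2}(z,w)}$.

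Next, I would invoke Biane's subordination: there exist analytic $\omega_{a_i}:\mathbb{D}\to\mathbb{D}$ (resp.\ $\mathbb{C}\setminus\mathbb{R}^+\to\mathbb{C}\setminus\mathbb{R}^+$) with $\Psi_{a_1a_2}(z)=\Psi_{a_i}(\omega_{a_i}(z))$, and analogously $\omega_{b_i}$. This gives $\Psi_{a_i}^{\langle-1\rangle}(P)=\omega_{a_i}(z)$ and $\Psi_{b_i}^{\langle-1\rangle}(Q)=\omega_{b_i}(w)$, so the right-hand side of the multiplicative identity becomes
$$\Bigl(\tfrac{(P+1)(Q+1)}{PQ}\Bigr)^{\!2}\prod_{i=1}^{2}\frac{\Psi_{a_i,b_i}(\omega_{a_i}(z),\omega_{b_i}(w))}{H_{a_i,b_i}(\omega_{a_i}(z),\omega_{b_i}(w))}.$$
Equating the two sides, cancelling one common factor $\tfrac{(P+1)(Q+1)}{PQ}$, and taking reciprocals, the left side becomes $\tfrac{H_{a_1a_2,b_1b_2}}{\Psi_{a_1a_2,b_1b_2}}=1+\tfrac{1+P+Q}{\Psi_{a_1a_2,b_1b_2}}=(1+P+Q)\bigl(\tfrac{1}{\Psi_{a_1a_2,b_1b_2}(z,w)}+\tfrac{1}{1+P+Q}\bigr)$, so dividing by $1+P+Q$ produces the left-hand side of the theorem. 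On the right, the elementary identity $PQ+1+P+Q=(1+P)(1+Q)$ is exactly what consolidates the numerator $\tfrac{1}{1+P+Q}+\tfrac{1}{PQ}$ into the required form, matching the stated quotient.

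The main obstacle is simply the bookkeeping: one must juggle several rational expressions in $P$, $Q$, and the $H$- and $\Psi$-transforms without factor or sign errors, and verify that Biane's subordination functions really carry the codomains asserted in the theorem in each of the two settings (unitaries versus non-negative elements), which is standard but must be invoked case by case. No further bi-free input is needed beyond the multiplicative formula for $S_{a,b}$; all bi-free content enters through that single identity.
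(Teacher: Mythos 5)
Your proposal is correct and follows essentially the same route as the paper: both arguments combine Voiculescu's multiplicative identity $S_{a_1a_2,b_1b_2}=S_{a_1,b_1}S_{a_2,b_2}$ with Biane's subordination functions and the relation $H_{a,b}=\Psi_{a,b}+\Psi_a+\Psi_b+1$, the only difference being that the paper first isolates the intermediate identity $\frac{1}{\Psi_{a,b}(z,w)}+\frac{1}{1+\Psi_a(z)+\Psi_b(w)}=\bigl(\frac{1}{1+\Psi_a(z)+\Psi_b(w)}+\frac{1}{\Psi_a(z)\Psi_b(w)}\bigr)/S_{a,b}(\Psi_a(z),\Psi_b(w))$ and then substitutes, whereas you expand both sides of the multiplicative formula directly and rearrange. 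The algebra you describe checks out and yields exactly the stated equation.
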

\begin{proof}By the above definition of partial bi-free  $S$-transforms,  if $\varphi(a)\ne 0\ne \varphi(b)$, and $z,w$ are near $0$ in $\mathbb{C}$ and $z\ne 0\ne w$, we have
\begin{align*}
S_{a,b}(\Psi_a(z), \Psi_b(w))&=\left(1+\frac{1+\Psi_a(z)+\Psi_b(w)}{\Psi_a(z)\Psi_b(w)}\right)\frac{\Psi_{a,b}(z,w)}{H_{a,b}(z,w)}\\
&=\frac{1+\frac{1+\Psi_a(z)+\Psi_b(w)}{\Psi_a(z)\Psi_b(w)}}{1+\frac{1+\Psi_a(z)+\Psi_b(w)}{\Psi_{a,b}(z,w)}}=\frac{\frac{1}{1+\Psi_a(z)+\Psi_b(w)}+\frac{1}{\Psi_a(z)\Psi_b(w)}}
{\frac{1}{1+\Psi_a(z)+\Psi_b(w)}+\frac{1}{\Psi_{a,b}(z, w)}}.
\end{align*}
It follows that
$$\frac{1}{\Psi_{a,b}(z, w)}+\frac{1}{1+\Psi_a(z)+\Psi_b(w)}=\frac{\frac{1}{1+\Psi_a(z)+\Psi_b(w)}+\frac{1}{\Psi_a(z)\Psi_b(w)}}
{S_{a,b}(\Psi_a(z),\Psi_b(w))}.$$
If $(a_1, b_1)$ and $(a_2, b_2)$ are bi-free, and $\varphi(a_i)\ne 0\ne \varphi(b_i)$, for $i=1, 2$, Voiculescu proved that $$S_{a_1a_2, b_1b_2}(z,w)=S_{a_1, b_1}(z,w)S_{a_2, b_2}(z,w),\eqno (1.1)$$ for $z,w\in \mathbb{C}\setminus \{0\}$ near zero (Theorem 2.1 in \cite{DV3}).  If furthermore, $a_1, a_2, b_1$, and $b_2$ are unitaries (or $a_1, a_2, b_1$, and $b_2$ are non-zero non-negative elements) in a $C^*$-probability space $(\A,\varphi)$, by the well-known subordination theorems (\cite{PB}), there are analytic functions $\omega_{a_i}, \omega_{b_i}:\mathbb{D}\rightarrow \mathbb{D}$ (or $\omega_{a_i}, \omega_{b_i}:\mathbb{C}\setminus \mathbb{R}^+\rightarrow \mathbb{C}\setminus \mathbb{R}^+$ ) such that $$\Psi_{a_1a_2}(z)=\Psi_{a_i}(\omega_{a_i}(z)),\  \Psi_{b_1b_2}(z)=\Psi_{b_i}(\omega_{b_i}(z)),$$ for $z\in \mathbb{D}$ (or $z\in \mathbb{C}\setminus \mathbb{R}^+$), for $i=1,2$.

We thus get, for bi-free pairs $(a_1, b_1)$ and $(a_2, b_2)$,
\begin{align*}
&\frac{1}{\Psi_{a_1a_2,b_1b_2}(z,w)}+\frac{1}{1+\Psi_{a_1a_2}(z)+\Psi_{b_1b_2}(w)}\\
=&\frac{\frac{1}{1+\Psi_{a_1a_2}(z)+\Psi_{b_1b_2}(w)}+\frac{1}{\Psi_{a_1a_2}(z)\Psi_{b_1b_2}(w)}}{S_{a_1,b_1}(\Psi_{a_1a_2}(z),\Psi_{b_1b_2}(w))S_{a_2,b_2}(\Psi_{a_1a_2}(z),\Psi_{b_1b_2}(w))}\\
=&\frac{\frac{1}{1+\Psi_{a_1}(\omega_{a_1}(z))+\Psi_{b_1}(\omega_{b_1}(w))}+\frac{1}{\Psi_{a_1}(\omega_{a_1}(z))\Psi_{b_1}(\omega_{b_1}(w))}}{S_{a_1,b_1}(\Psi_{a_1}(\omega_{a_1}(z)),\Psi_{b_1}(\omega_{b_1}(w))S_{a_2,b_2}(\Psi_{a_2}(\omega_{a_2}(z)),\Psi_{b_2}(\omega_{b_2}(w))}\\
=&\frac{\frac{1}{1+\Psi_{a_1}(\omega_{a_1}(z))+\Psi_{b_1}(\omega_{b_1}(w))}+\frac{1}{\Psi_{a_1}(\omega_{a_1}(z))\Psi_{b_1}(\omega_{b_1}(w))}}{\left( \frac{(1+\Psi_{a_1}(\omega_1(z)))(1+\Psi_{b_1}(\omega_1(w)))}{\Psi_{a_1}(\omega_1(z))\Psi_{b_1}(\omega_{b1}(w))}\right)^2\frac{\Psi_{a_1, b_1}(\omega_{a_1}(z), \omega_{b_1}(w))}{H_{a_1,b_1}(\omega_{a_1}(z),\omega_{b_1}(w))}\frac{\Psi_{a_2, b_2}(\omega_{a_2}(z), \omega_{b_2}(w))}{H_{a_2,b_2}(\omega_{a_2}(z),\omega_{b_2}(w))}},\\
\end{align*}
for $z,w\in \mathbb{C}\setminus \mathbb{R}^+$ near zero.
\end{proof}
\begin{Remark}If $(a_1, b_1)$ and $(a_2, b_2)$ are bi-free and $a_ib_i=b_ia_i$, for $i=1,2$, then the joint distribution $\mu_i$ of $a_i, b_i$ is determined by its two-band moments $\{\varphi(a_i^mb_i^n):m,n=1, 2, \}$ for $i=1,2$. So is the distribution of the product pair $(a_1a_2, b_1b_2)$. Indeed, $$\varphi((a_1a_2)^{m_1}(b_1b_2)^{n_1}\cdots (a_ia_2)^{m_k}(b_1b_2)^{n_k})=\varphi((a_1a_2)^m(b_1b_2)^n),$$ where $m=\sum_{i=1}^{k}m_i$,  $n=\sum_{i=1}^{k}n_i$, and we used the fact that $a_ib_j=b_ja_i$, for $i\ne j$ if $(a_1, b_1)$ and $(a_2, b_2)$ are bi-free. In this case, we denote the distribution $\mu$ of $(a_1a_2, b_1b_2)$ by $\mu_1\boxtimes\boxtimes\mu_2$.  It follows that we can get the $\Psi$-transform for the new probability measure $\mu_1\boxtimes\boxtimes\mu_2$. Indeed, let $A(z,w)=1+\Psi_{a_1}(\omega_{a_1}(z))+\Psi_{b_1}(\omega_{b_1}(w))$ and $B(z,w)=\Psi_{a_1}(\omega_{a_1}(z))\Psi_{b_1}(\omega_{b_1}(w))$, we can rewrite the equation in Theorem 2.1 as
$$\Psi_{\mu_1\boxtimes\boxtimes\mu_2}(z,w)
=\left(\frac{\frac{1}{A(z,w)}+\frac{1}{B(z,w)}}{\left(1+\frac{A(z,w)}{B(z,w)}\right)^2\frac{\Psi_{a_1, b_1}(\omega_{a_1}(z), \omega_{b_1}(w))}{H_{a_1,b_1}(\omega_{a_1}(z),\omega_{b_1}(w))}\frac{\Psi_{a_2, b_2}(\omega_{a_2}(z), \omega_{b_2}(w))}{H_{a_2,b_2}(\omega_{a_2}(z),\omega_{b_2}(w))}} -\frac{1}{A(z,w)}\right)^{-1}, \eqno(1.2)$$ for $z,w\in \mathbb{C}\setminus \mathbb{R}^+$ near zero.
\end{Remark}
\begin{Remark}
If  $(a_1, b_1)$ and $(a_2, b_2)$ are bi-free, $a_ib_i=b_ia_i$, for $i=1,2$, and $a_1, a_2, b_1, b_2$ are unitaries in a $C^*$-probability space $(\A,\varphi)$ such that $\varphi(a_i)\varphi(b_i)\varphi(a_ib_i)\ne 0$, for $i=1, 2$, then the distributions of $(a_1, b_1)$, $(a_2, b_2)$, and $(a_1a_2, b_1b_2)$ are probability measures on $\mathbb{T}^2$. In this case, Huang and Wang defined $H$ and $\Psi$ transforms as integrals of the distribution in \cite{HW}:
$$\Psi_\mu(z,w)=\int_{\mathbb{T}^2}\frac{zs}{1-zs}\frac{wt}{1-wt}d\mu(s,t),\  H_\mu(z,w)=\int_{\mathbb{T}^2}\frac{1}{(1-zs)(1-wt)}d\mu(s,t), (z,w)\in (\mathbb{C}\setminus \mathbb{T})^2,$$ where $\mu$ is a probability measure on $\mathbb{T}^2$. The two functions are holomorphic in their domains.
Then   by the proof of Theorem 1.1, for $z,w$ near zero and $z\ne 0\ne w$, we have
\begin{align*}&\frac{1}{\Psi_{a_1a_2,b_1b_2}(z,w)}+\frac{1}{1+\Psi_{a_1a_2}(z)+\Psi_{b_1b_2}(w)}\\
=&\frac{\frac{1}{1+\Psi_{a_1a_2}(z)+\Psi_{b_1b_2}(w)}+\frac{1}{\Psi_{a_1a_2}(z)\Psi_{b_1b_2}(w)}}{S_{a_1,b_1}(\Psi_{a_1a_2}(z),\Psi_{b_1b_2}(w))S_{a_2,b_2}(\Psi_{a_1a_2}(z),\Psi_{b_1b_2}(w))}\\
=&\frac{(1+\Psi_{a_1a_2}(z))(1+\Psi_{b_1b_2}(w))}{(1+\Psi_{a_1a_2}(z)+\Psi_{b_1b_2}(w))(\Psi_{a_1a_2}(z)\Psi_{b_1b_2}(w))S_{a_1,b_1}(\Psi_{a_1a_2}(z),\Psi_{b_1b_2}(w))S_{a_2,b_2}(\Psi_{a_1a_2}(z),\Psi_{b_1b_2}(w))}\\
=&\frac{H_{a_1, b_1}(\omega_{a_1}(z), \omega_{b_1}(w))H_{a_2, b_2}(\omega_{a_2}(z), \omega_{b_2}(w))}{(1+\Psi_{a_1a_2}(z)+\Psi_{b_1b_2}(w))(1+\Psi_{a_1a_2}(z))(1+\Psi_{b_1b_2}(w))}
\frac{\Psi_{a_1}(\omega_{a_1}(z))\Psi_{b_1}(\omega_{b_1}(w))}{\Psi_{a_1, b_1}(\omega_{a_1}(z), \omega_{b_1}(w))\Psi_{a_2, b_2}(\omega_{a_2}(z), \omega_{b_2}(w))}.
\end{align*}
Note that in this case, $\Psi_{a,b}(z,w)=zw\eta_{a,b}(z,w)$,  where
$\eta_{a,b}(z,w)=\varphi(ab(1-za)^{-1}(1-wb)^{-1})))$ is holomorphic in  $(\mathbb{C}\setminus \mathbb{T})^2$, and $\lim_{z,w\rightarrow 0}\eta_{a,b}(z,w)=\varphi(ab)\ne 0$. Similarly, $\Psi_a(z)=z\zeta_a(z)$, where $\zeta_a(z)=\varphi(a(1-za)^{-1})$ is holomorphic in $\mathbb{C}\setminus \mathbb{T}$, and $\lim_{z\rightarrow 0}\zeta_a(z)=\varphi(a)\ne 0$. Now we can continue our calculation,
\begin{align*}
&\frac{1}{\Psi_{a_1a_2,b_1b_2}(z,w)}+\frac{1}{1+\Psi_{a_1a_2}(z)+\Psi_{b_1b_2}(w)}\\
=&\frac{H_{a_1, b_1}(\omega_{a_1}(z), \omega_{b_1}(w))H_{a_2, b_2}(\omega_{a_2}(z), \omega_{b_2}(w))}{(1+\Psi_{a_1a_2}(z)+\Psi_{b_1b_2}(w))(1+\Psi_{a_1a_2}(z))(1+\Psi_{b_1b_2}(w))}\frac{\Psi_{a_1}(\omega_{a_1}(z))\Psi_{b_1}(\omega_{b_1}(w))}{\Psi_{a_1, b_1}(\omega_{a_1}(z), \omega_{b_1}(w))\Psi_{a_2, b_2}(\omega_{a_2}(z), \omega_{b_2}(w))}\\
=&\frac{H_{a_1, b_1}(\omega_{a_1}(z), \omega_{b_1}(w))H_{a_2, b_2}(\omega_{a_2}(z), \omega_{b_2}(w))}{(1+\Psi_{a_1a_2}(z)+\Psi_{b_1b_2}(w))(1+\Psi_{a_1a_2}(z))(1+\Psi_{b_1b_2}(w))}
\frac{\eta_{a_1}(\omega_{a_1}(z))\eta_{b_1}(\omega_{b_1}(w))}{\eta_{a_1, b_1}(\omega_{a_1}(z), \omega_{b_1}(w))\Psi_{a_2, b_2}(\omega_{a_2}(z), \omega_{b_2}(w))}.
\end{align*}

Let $\mu_1$ and $\mu_2$ be the distributions of $(a_1, b_1)$ and $(a_2, b_2)$, respectively. We then have
$$\Psi_{\mu_1\boxtimes\boxtimes\mu_2}(z,w)=\Psi_{a_1a_2, b_1b_2}(z, w)=\frac{F(z,w)}{G(z,w)},\eqno (1.3)$$ for $z\ne0\ne w$ near zero in $\mathbb{C}$,  where
\begin{align*}
&F(z,w)\\
=&(1+\Psi_{a_1a_2}(z)+\Psi_{b_1b_2}(w))(1+\Psi_{a_1a_2}(z))(1+\Psi_{b_1b_2}(w))\\
\times &\eta_{a_1, b_1}(\omega_{a_1}(z), \omega_{b_1}(w))\Psi_{a_2, b_2}(\omega_{a_2}(z), \omega_{b_2}(w))\\
&G(z,w)\\
=& H_{a_1, b_1}(\omega_{a_1}(z), \omega_{b_1}(w))H_{a_2, b_2}(\omega_{a_2}(z), \omega_{b_2}(w))\eta_{a_1}(\omega_{a_1}(z))\eta_{b_1}(\omega_{b_1}(w))\\
-&(1+\Psi_{a_1a_2}(z))(1+\Psi_{b_1b_2}(w))\eta_{a_1, b_1}(\omega_{a_1}(z), \omega_{b_1}(w))\Psi_{a_2, b_2}(\omega_{a_2}(z), \omega_{b_2}(w)).
\end{align*}
Since $F$ and $G$ are holomorphic in the unit bi-disk $\mathbb{D}^2=\{(z,w)\in \mathbb{C}^2: |z|, |w|\le 1\}$, and $$\lim_{z,w\rightarrow 0}G(z,w)=\varphi (a_1)\varphi(b_1)\ne 0,$$ we have that $\frac{F(z,w)}{G(z,w)}$ is a homomorphic function $\mathbb{D}^2$.  The equation $(2.3)$ shows that $\Psi_{a_1, a_2, b_1, b_2}(z,w)=\frac{F(z,w)}{G(z,w)}$ in a neighborhood of $(0,0)$ in $\mathbb{D}^2$. By the zero property theorem for multi-variable holomorphic functions [G, Lemma 24], the  homomorphic function $\Psi_{a_1, a_2, b_1, b_2}(z,w)$ is equal to the homomorphic function $\frac{F(z,w)}{G(z,w)}$ on the whole region $\mathbb{D}^2$.
Now we can repeat the discussion in Page 8 in \cite{HW}:

 Let $$g(z,w)=4\Psi_{a_1a_2,b_1b_2}(z,w)+2(\Psi_{a_1a_2}(z)+\Psi_{b_1b_2}(w))+1.$$ Then $$\Re (\frac{g(z,w)-g(z,1/\overline{w})}{2})=\int_{\mathbb{T}^2}\Re(\frac{1+zs}{1-zs})\Re(\frac{1+wt}{1-wt})d\mu_1\boxtimes\boxtimes\mu_2(s,t), (z,w)\in \mathbb{D}^2,$$ recovers the values of the Poisson integral of the measure $d\mu_1\boxtimes\boxtimes\mu_2(1/s, 1/t)$, determining $\mu_1\boxtimes\boxtimes\mu_2$.

It was pointed out in \cite{HW} that the $S$- or $\Sigma$- transform of a measure $\mu$ is insufficient to determine the measure $\mu$ (See the discussion in Page 11 in \cite{HW}). Our discussion above shows that  formula $(1.3)$ provides a complete solution to determining $\mu_1\boxtimes\boxtimes\mu_2$, as it determines the marginals together with the $S$-transform,  while Voiculescu's multiplicative identity $(1.1)$ provides a method for computing the $S$-transform of $\mu_1\boxtimes\boxtimes\mu_2$,  for two probability measures $\mu_1$ and $\mu_2$ on $\mathbb{T}^2$ providing $$m_{1,1}(\mu_i)=\int_{\mathbb{T}^2}std(\mu_i(s,t))\ne 0,$$
$$ m_{1,0}(\mu_i)=\int_{\mathbb{T}^2}sd(\mu_i(s,t))\ne 0,\  m_{0,1}(\mu_i)=\int_{\mathbb{T}^2}td(\mu_i(s,t))\ne 0,$$ for $i=1,2$.
\end{Remark}

\section{The  $S$-transform of the $2\times 2$ matrix $X$ associated with $(a,b)$}

 Let $\Gamma=\left(\begin{matrix}z&\zeta\\0&w\end{matrix}\right)$ be an invertible upper triangular matrix in $M_2(\mathbb{C})$. For $a$ and $b$ in $(\A,\varphi)$, let $X$ be the matrix-valued random variable $\left(\begin{matrix}
                                                                                         a & 0 \\
                                                                                         0& b
                                                                                       \end{matrix}\right)$
in the matrix-valued non-commutative probability space $(M_2(\A), E, M_2(\mathbb{C}))$.
 We will study the $S$-transform of $X$, and find conditions under which $X_1$ and $X_2$ satisfy Dykema's twisted multiplicative equation for free operator-valued random variables,  if $(a_1, b_1)$ and $(a_2, b_2)$ are bi-free, where $X_j=\left(\begin{matrix}a_j&0\\0&b_j\end{matrix}\right)$, $j=1,2$.

We will call an {\sl operator-valued noncommutative probability space} a triple $(\A, E, \B)$, where $\B\subset \A$ is an inclusion of von Neumann algebras, $E:\A\rightarrow \B$ is a unit-preserving conditional expectation. Elements in $\A$ are called {\sl operator-valued random variables}. Two subalgebras $\A_1, \A_2$ of $\A$ containing $\B$ are called {\sl free over $\B$} if $$E(x_1x_2\cdots x_n)=0$$ whenever $n\in \mathbb{N}, x_j\in \A_{i_j}$ satisfy $E(x_j)=0$ and $i_j\ne i_{j+1}$, $1\le j\le n-1$. Two random variables $x,y\in \A$ are free over $\B$ if the algebras $B\langle x\rangle$ and $\B\langle y\rangle$ generated by $\B$ and $x$, and $\B$ and $y$, respectively, are free over $\B$ (Section 2 in \cite{BSTV}).

For an operator-valued non-commutative probability space $(\A, E, \B)$, define $\mathbb{H}^+(\B)=\{b\in \B: \Im b=\frac{b-b^*}{2i}>0\}$, and an analytic mapping $\Psi_x(b)=E((1-bx)^{-1}-1)$, for $x\in \A, b\in \mathbb{H}^+(\B)$. The mapping $\Psi_x(b)$ has an inverse $\Psi_x^{\langle-1\rangle}$ around zero, if $E(x)$ is invertible in $\B$ (Section 2 of \cite{BSTV}). Dykema \cite{KD} defined the $S$-transform $S_X$ for an operator-valued random variable $X\in (\A, E, \B)$ as follows $$S_X(b)=b^{-1}(1+b)\Psi_X^{\langle-1\rangle}(b),$$ when $b$ is invertible and $\|b\|$ is small enough. Dykema proved in Theorem 1.1 of \cite{KD} that, whenever $E(x)$ and $E(y)$ are both invertible in $\B$, $$S_{xy}(b)=S_y(b)S_x(S_y(b)^{-1}bS_y(b)),\eqno (2.1)$$ for invertible $b\in \B$ and $\|b\|$  small enough.

\begin{Lemma} Let $X_j=\left(\begin{matrix}a_j&0\\0&b_j\end{matrix}\right)$, for $j=1,2$. If $(a_1,b_1)$ and $(a_2, b_2)$ are bi-free in $(\A, \varphi)$, and $\varphi(a_j)\varphi(b_j)\ne 0$, for $j=1,2$,   then we have $$\lim_{z\rightarrow 0, w\rightarrow 0}S_{X_1X_2}(\Gamma)=\left(\begin{matrix}\frac{1}{\varphi(a_1)\varphi(a_2)}&\zeta \frac{\varphi(a_1)\varphi(a_2)\varphi(b_1)\varphi(b_2)-\varphi(a_1b_1)\varphi(a_2b_2)}{\varphi(a_1)\varphi(a_2)(\varphi(b_1)\varphi(b_2))^2}\\
0&\frac{1}{\varphi(b_1)\varphi(b_2)}\end{matrix}\right).\eqno(2.2)$$
\end{Lemma}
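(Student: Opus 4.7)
The plan is to compute $S_{X_1X_2}(\Gamma)=\Gamma^{-1}(1+\Gamma)\Psi_{X_1X_2}^{\langle-1\rangle}(\Gamma)$ explicitly by exploiting the upper-triangular structure preserved by $\Gamma X_1X_2$, and then pass to the limit $(z,w)\to(0,0)$ with careful bookkeeping of leading-order terms.

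Since $X_1X_2=\left(\begin{matrix}a_1a_2&0\\0&b_1b_2\end{matrix}\right)$ and $\Gamma$ is upper triangular, so is $I-\Gamma X_1X_2$, and block inversion yields
$$\Psi_{X_1X_2}(\Gamma)=\left(\begin{matrix}\Psi_{a_1a_2}(z)&\zeta K(z,w)\\ 0&\Psi_{b_1b_2}(w)\end{matrix}\right),$$
where $K(z,w):=\varphi((1-za_1a_2)^{-1}b_1b_2(1-wb_1b_2)^{-1})=\sum_{m,n\ge 0}z^m w^n\varphi((a_1a_2)^m(b_1b_2)^{n+1})$ is holomorphic near $(0,0)$ with $K(0,0)=\varphi(b_1b_2)\ne 0$. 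Solving $\Psi_{X_1X_2}(\Gamma')=\Gamma$ on upper-triangular $\Gamma'$ reduces to three scalar equations and produces
$$\Psi_{X_1X_2}^{\langle-1\rangle}(\Gamma)=\left(\begin{matrix}z'&\zeta/K(z',w')\\ 0&w'\end{matrix}\right),\quad z':=\Psi_{a_1a_2}^{\langle-1\rangle}(z),\ w':=\Psi_{b_1b_2}^{\langle-1\rangle}(w).$$
A direct calculation gives $\Gamma^{-1}(1+\Gamma)=\left(\begin{matrix}(1+z)/z&-\zeta/(zw)\\ 0&(1+w)/w\end{matrix}\right)$, whence
$$S_{X_1X_2}(\Gamma)=\left(\begin{matrix}(1+z)z'/z & \zeta\left[\dfrac{1+z}{zK(z',w')}-\dfrac{w'}{zw}\right]\\ 0 & (1+w)w'/w\end{matrix}\right).$$

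The diagonal limits are immediate: $z'/z\to 1/\varphi(a_1a_2)$ and $w'/w\to 1/\varphi(b_1b_2)$, and bi-freeness implies that $a_1,a_2$ (resp.\ $b_1,b_2$) are classically free, giving $\varphi(a_1a_2)=\varphi(a_1)\varphi(a_2)$ and $\varphi(b_1b_2)=\varphi(b_1)\varphi(b_2)$. For the off-diagonal, rewrite the bracket as $\dfrac{(1+z)w-w'K(z',w')}{zwK(z',w')}$ and substitute the Taylor expansions $w=\varphi(b_1b_2)w'+\varphi((b_1b_2)^2)(w')^2+O((w')^3)$ and $K(z',w')=\varphi(b_1b_2)+z'\varphi(a_1a_2b_1b_2)+w'\varphi((b_1b_2)^2)+O(\|(z',w')\|^2)$; the $\varphi((b_1b_2)^2)(w')^2$ contributions cancel and the numerator simplifies to $zw-z'w'\varphi(a_1a_2b_1b_2)+(\text{higher order})$. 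Dividing by $zwK(z',w')$ and using $z'w'/(zw)\to 1/(\varphi(a_1)\varphi(a_2)\varphi(b_1)\varphi(b_2))$ together with $K(z',w')\to\varphi(b_1)\varphi(b_2)$ yields the off-diagonal limit
$$\zeta\cdot\frac{\varphi(a_1)\varphi(a_2)\varphi(b_1)\varphi(b_2)-\varphi(a_1a_2b_1b_2)}{\varphi(a_1)\varphi(a_2)(\varphi(b_1)\varphi(b_2))^2}.$$

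To arrive at (2.2) one then needs the moment factorization $\varphi(a_1a_2b_1b_2)=\varphi(a_1b_1)\varphi(a_2b_2)$, which is the main conceptual input. I plan to derive it from Voiculescu's multiplicative identity (1.1): a short computation from the definition of the partial bi-free $S$-transform shows that $\lim_{z,w\to 0}S_{a,b}(z,w)=\varphi(ab)/(\varphi(a)\varphi(b))$, so taking the $(z,w)\to (0,0)$ limit of $S_{a_1a_2,b_1b_2}(z,w)=S_{a_1,b_1}(z,w)S_{a_2,b_2}(z,w)$ and clearing denominators gives exactly the required identity. The main technical obstacle is tracking the cancellation of the second-order $(w')^2$ contributions in the off-diagonal expansion with the correct sign; once this cancellation is in place, the factorization from bi-freeness converts the limit into the right-hand side of (2.2).
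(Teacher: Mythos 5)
Your computation is correct and, for the main body of the argument, coincides with the paper's: your $K(z,w)$ is exactly the paper's $\tfrac{1}{w}\bigl(\Psi_{a,b}(z,w)+\Psi_b(w)\bigr)$ for $(a,b)=(a_1a_2,b_1b_2)$, and the inversion of $\Psi_{X_1X_2}$, the product with $\Gamma^{-1}(1+\Gamma)$, and the diagonal limits are handled identically. Two comments. First, on the off-diagonal bookkeeping you single out as the main technical obstacle: your truncated expansions leave a remainder of total order $3$ in $(z',w')$, and after dividing by $zw\sim\varphi(a_1a_2)\varphi(b_1b_2)\,z'w'$ a term such as $(w')^3$ becomes $(w')^2/z'$, which does not obviously vanish under the unrestricted joint limit $(z,w)\to(0,0)$. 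The cancellation is in fact exact at all orders: since $w'K(z',w')=H_{a_1a_2,b_1b_2}(z',w')-h_{a_1a_2}(z')=\Psi_{a_1a_2,b_1b_2}(z',w')+w$, your numerator equals $zw-\Psi_{a_1a_2,b_1b_2}(z',w')$ exactly, and both terms are divisible by $z'w'$; this is how the paper organizes the step, via $\lim_{z,w\to0}\frac{zw}{\Psi_{a_1a_2,b_1b_2}(\Psi_{a_1a_2}^{\langle-1\rangle}(z),\Psi_{b_1b_2}^{\langle-1\rangle}(w))}=\frac{\varphi(a_1a_2)\varphi(b_1b_2)}{\varphi(a_1a_2b_1b_2)}$. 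You should make this identity explicit rather than rely on order-by-order cancellation. Second, the one genuinely different step is the factorization $\varphi(a_1a_2b_1b_2)=\varphi(a_1b_1)\varphi(a_2b_2)$: the paper derives it directly from bi-freeness by centering, writing $x=x^0+\varphi(x)$ and invoking Lemma 2.1 of \cite{DV2}, whereas you obtain it by letting $(z,w)\to(0,0)$ in Voiculescu's identity $(1.1)$ together with $\lim_{z,w\to0}S_{a,b}(z,w)=\varphi(ab)/(\varphi(a)\varphi(b))$. Your route is shorter and reuses a theorem the paper needs anyway, but it imports the full strength of Theorem 2.1 of \cite{DV3} for what is an elementary fourth-moment computation; the paper's centering argument is self-contained and shows exactly which bi-freeness relations are being used. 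Both derivations are valid.
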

\begin{proof}
Let's calculate the $S$-transform of $X$. First we need to figure out $\Psi_X^{\langle-1\rangle}(\Gamma)$
\begin{align*}
\Psi_X(\Gamma)&=E((1-\Gamma X)^{-1})-1=E\left(\left(\begin{matrix}
                                                      1-za & -\zeta b \\
                                                      0 & 1-wb
                                                    \end{matrix}\right)^{-1}\right)-1\\
&=E\left(\left(\begin{matrix}
                            (1-za)^{-1} & \frac{\zeta}{w}(1-za)^{-1}w b(1-wb)^{-1} \\
                                                      0 & (1-wb)^{-1}
\end{matrix}\right)\right)-1\\
&=\left(\begin{matrix}\varphi((1-za)^{-1})-1 & \frac{\zeta}{w}\varphi((1-za)^{-1}w b(1-wb)^{-1}) \\
                                                      0 & \varphi((1-wb)^{-1})-1
                                                    \end{matrix}\right)
                                                    =\left(\begin{matrix}
                                                      \Psi_a(z) & \frac{\zeta}{w}(H_{a,b}(z,w)-h_a(z))\\
                                                      0 & \Psi_b(w)
                                                    \end{matrix}\right)\\
&=\left(\begin{matrix}
                                                      \Psi_a(z) & \frac{\zeta}{w}(\Psi_{a,b}(z,w)+\Psi_b(w))\\
                                                      0 & \Psi_b(w)
                                                    \end{matrix}\right).
\end{align*}
It implies that the inverse mapping $\Psi_X^{\langle-1\rangle}(\Gamma)$ of $\Psi_X(\Gamma)$ has the following form
$$\Psi_X^{\langle-1\rangle}(\Gamma)=\left(\begin{matrix}
                                                      \Psi_a^{\langle-1\rangle}(z) &\zeta \frac{\Psi_b^{\langle-1\rangle}(w)}{\Psi_{a,b}(\Psi_a^{\langle-1\rangle}(z),\Psi_b^{\langle-1\rangle}(w))+w}\\
                                                      0 & \Psi_b^{\langle-1\rangle}(w)
                                                    \end{matrix}\right).$$
It follows that, whenever $\varphi(a)\ne 0\ne \varphi(b)$,
\begin{align*}
S_X(\Gamma)=&\Gamma^{-1}(1+\Gamma)\Psi_X^{\langle-1\rangle}(\Gamma)
=\left(\begin{matrix}\frac{z+1}{z}&-\frac{\zeta}{zw}\\0&\frac{w+1}{w}\end{matrix}\right)\left(\begin{matrix}
                                                      \Psi_a^{\langle-1\rangle}(z) &\zeta \frac{\Psi_b^{\langle-1\rangle}(w)}{\Psi_{a,b}(\Psi_a^{\langle-1\rangle}(z),\Psi_b^{\langle-1\rangle}(w))+w}\\
                                                      0 & \Psi_b^{\langle-1\rangle}(w)
                                                    \end{matrix}\right)\\
=&\left(\begin{matrix}S_a(z)&\frac{\zeta(z+1)}{z}\frac{\Psi_b^{\langle-1\rangle}(w)}{\Psi_{a,b}(\Psi_a^{\langle-1\rangle}(z),\Psi_b^{\langle-1\rangle}(w))+w}
-\frac{\zeta\Psi_b^{\langle-1\rangle}(w)}{zw}\\
0&S_b(w)\end{matrix}\right).\\
\end{align*}
Let $X_i=\left (\begin{matrix}a_i&0\\0&b_i\end{matrix}\right)$, for $i=1, 2$. The $(1,2)$ entry of $S_{X_1X_2}$ has the following form
\begin{align*}
&\frac{\zeta(z+1)}{z}\frac{\Psi_{b_1b_2}^{\langle-1\rangle}(w)}{\Psi_{a_1a_2,b_1b_2}(\Psi_{a_1a_2}^{\langle-1\rangle}(z),\Psi_{b_1b_2}^{\langle-1\rangle}(w))+w}
-\frac{\zeta\Psi_{b_1b_2}^{\langle-1\rangle}(w)}{zw}\\
=&\frac{\zeta(z+1)\Psi_{b_1b_2}^{\langle-1\rangle}(w)}{z}\left( \frac{1}{\Psi_{a_1a_2,b_1b_2}(\Psi_{a_1a_2}^{\langle-1\rangle}(z),\Psi_{b_1b_2}^{\langle-1\rangle}(w))+w}-\frac{1}{w(z+1)}\right)\\
=&\frac{\zeta(z+1)\Psi_{b_1b_2}^{\langle-1\rangle}(w)}{z}\left( \frac{wz+w-\Psi_{a_1a_2,b_1b_2}(\Psi_{a_1a_2}^{\langle-1\rangle}(z),\Psi_{b_1b_2}^{\langle-1\rangle}(w))-w}{(\Psi_{a_1a_2,b_1b_2}(\Psi_{a_1a_2}^{\langle-1\rangle}(z),\Psi_{b_1b_2}^{\langle-1\rangle}(w))+w)w(z+1)}\right)\\
=&\frac{\zeta\Psi_{b_1b_2}^{\langle-1\rangle}(w)}{w}\left( \frac{\frac{wz}{\Psi_{a_1a_2,b_1b_2}(\Psi_{a_1a_2}^{\langle-1\rangle}(z),\Psi_{b_1b_2}^{\langle-1\rangle}(w))}-1}
{z+\frac{zw}{\Psi_{a_1a_2,b_1b_2}(\Psi_{a_1a_2}^{\langle-1\rangle}(z),\Psi_{b_1b_2}^{\langle-1\rangle}(w))}}\right).
\end{align*}
Since $\lim_{w\rightarrow 0}\frac{\Psi_{b_1b_2}^{\langle-1\rangle}(w)}{w}=\frac{1}{\varphi(b_1b_2)}$, and
\begin{align*}
&\lim_{z,w\rightarrow 0}\frac{zw}{\Psi_{a_1a_2,b_1b_2}(\Psi_{a_1a_2}^{\langle-1\rangle}(z),\Psi_{b_1b_2}^{\langle-1\rangle}(w))}\\
=&\lim_{z,w\rightarrow 0}\frac{\Psi_{a_1a_2}^{\langle-1\rangle}(z)\Psi_{b_1b_2}^{\langle-1\rangle}(w)}{\Psi_{a_1a_2,b_1b_2}(\Psi_{a_1a_2}^{\langle-1\rangle}(z),\Psi_{b_1b_2}^{\langle-1\rangle}(w))}
\lim_{z,w\rightarrow 0}\frac{zw}{\Psi_{a_1a_2}^{\langle-1\rangle}(z)\Psi_{b_1b_2}^{\langle-1\rangle}(w)}\\
=&\frac{1}{\varphi(a_1a_2b_1b_2)}\varphi(a_1a_2)\varphi(b_1b_2),
\end{align*}
we get that, when $z, w\rightarrow 0$, the $(1,2)$ entry of $S_{X_1X_2}$ has a limit $\zeta \frac{\varphi(a_1a_2)\varphi(b_1b_2)-\varphi(a_1a_2b_1b_2)}{\varphi(a_1a_2)\varphi(b_1b_2)^2}$.

Let $x=x^0+\varphi(x)$, for $x\in \A$, where $x^0=x-\varphi(x)$. Note that $(a_1, b_1)$ and $(a_2, b_2)$ are bi-free, by Lemma 2.1 of \cite{DV2}, we have
\begin{align*}
\varphi(a_1a_2b_1b_2)=&\varphi((a_1^0+\varphi(a_1))(a_2^0+\varphi(a_2))(b_1^0+\varphi(b_1))(b_2^0+\varphi(b_2)))\\
=&\varphi(a_1^0a_2^0b_1^0b_2^0)+\varphi(a_1)\varphi(a_2)\varphi(b_1)\varphi(b_2)+\varphi(a_1)\varphi(b_1)\varphi(a_2^0b_2^0)+\varphi(a_2)\varphi(b_2)\varphi(a_1^0b_1^0)\\
=&\varphi(a_1^0b_1^0)\varphi(a_2^0b_2^0)+\varphi(a_1)\varphi(a_2)\varphi(b_1)\varphi(b_2)+\varphi(a_1)\varphi(b_1)\varphi(a_2^0b_2^0)+\varphi(a_2)\varphi(b_2)\varphi(a_1^0b_1^0)\\
=&(\varphi(a_1^0b_1^0)+\varphi(a_1b_1))\varphi(a_2^0b_2^0)+\varphi(a_2)\varphi(b_2)(\varphi(a_1^0b_1^0)+\varphi(a_1b_1))=\varphi(a_1b_1)\varphi(a_2b_2).
\end{align*}
On the other hand, by the proof of Theorem 4.1 in \cite{DV3}, we have
$$\lim_{z\rightarrow 0}S_{a}(z)=\lim_{z\rightarrow 0}\frac{1+z}{z}\Psi_a^{\langle -1\rangle}(z)=\varphi(a)^{-1}. $$

The conclusive equation $(2.2)$ now follows from the above calculations, and the fact that $a_1$ and $a_2$, and $b_1$ and $b_2$ are two free pairs of random variables whenever $\{a_1, b_1\}$ and $\{a_2, b_2\}$ are bi-free.
\end{proof}

\begin{Lemma}If $(a_1,b_1)$ and $(a_2, b_2)$ are bi-free in $(\A, \varphi)$ and $\varphi(a_j)\varphi(b_j)\ne 0$, for $j=1,2$,   then we have \begin{align*}&\lim_{z\rightarrow 0, w\rightarrow 0}S_{X_2}(\Gamma)S_{X_1}(S_{X_2}(\Gamma)^{-1}\Gamma S_{X_2}(\Gamma))\\
=&\left(\begin{matrix}\frac{1}{\varphi(a_1)\varphi(a_2)}&\zeta \left(\frac{\varphi(a_1)\varphi(b_1)-\varphi(a_1b_1)}{\varphi(a_1)\varphi(b_1)^2\varphi(b_2)}+\frac{\varphi(a_2)\varphi(b_2)-\varphi(a_2b_2)}{\varphi(a_2)\varphi(b_2)^2\varphi(b_1)}\right)\\
0&\frac{1}{\varphi(b_1)\varphi(b_2)}\end{matrix}\right).
\end{align*}
\end{Lemma}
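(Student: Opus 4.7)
The plan is to reuse the explicit template for $S_X(\Gamma)$ that was derived in the proof of Lemma 2.1 and to track how the upper triangular structure propagates through the twist $M\mapsto M^{-1}\Gamma M$ and the second application of an $S$-transform. Specifically, the Lemma 2.1 calculation gives
$$S_X(\Gamma)=\begin{pmatrix}S_a(z) & \zeta\, T_{a,b}(z,w)\\ 0 & S_b(w)\end{pmatrix},$$
where
$$T_{a,b}(z,w):=\frac{z+1}{z}\cdot\frac{\Psi_b^{\langle-1\rangle}(w)}{\Psi_{a,b}(\Psi_a^{\langle-1\rangle}(z),\Psi_b^{\langle-1\rangle}(w))+w}-\frac{\Psi_b^{\langle-1\rangle}(w)}{zw},$$
and the algebraic manipulation shown there, applied to a single pair rather than the product pair $(a_1a_2,b_1b_2)$, yields $\lim_{z,w\to 0}T_{a,b}(z,w)=(\varphi(a)\varphi(b)-\varphi(ab))/(\varphi(a)\varphi(b)^{2})$, together with the classical $\lim_{z\to 0}S_a(z)=1/\varphi(a)$. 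These are the only analytic inputs I need.

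Next I will compute $M^{-1}\Gamma M$ for a general invertible upper triangular $M=\begin{pmatrix}\alpha & \beta\\ 0 &\gamma\end{pmatrix}$. A direct $2\times 2$ multiplication gives
$$M^{-1}\Gamma M=\begin{pmatrix} z & (\beta(z-w)+\zeta\gamma)/\alpha\\ 0 & w\end{pmatrix},$$
so the diagonal $(z,w)$ of $\Gamma$ is preserved. Substituting $\alpha=S_{a_2}(z)$, $\gamma=S_{b_2}(w)$, $\beta=\zeta T_{a_2,b_2}(z,w)$, the matrix $S_{X_2}(\Gamma)^{-1}\Gamma S_{X_2}(\Gamma)$ is again upper triangular with the same diagonal, so the template above applies a second time and delivers
$$S_{X_1}\bigl(S_{X_2}(\Gamma)^{-1}\Gamma S_{X_2}(\Gamma)\bigr)=\begin{pmatrix}S_{a_1}(z) & \dfrac{\zeta[T_{a_2,b_2}(z,w)(z-w)+S_{b_2}(w)]}{S_{a_2}(z)}\,T_{a_1,b_1}(z,w)\\ 0 & S_{b_1}(w)\end{pmatrix}.$$

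Finally, multiplying on the left by $S_{X_2}(\Gamma)$, the $(1,1)$ and $(2,2)$ entries of the product become $S_{a_2}(z)S_{a_1}(z)$ and $S_{b_2}(w)S_{b_1}(w)$, which tend to $1/(\varphi(a_1)\varphi(a_2))$ and $1/(\varphi(b_1)\varphi(b_2))$ respectively. In the $(1,2)$ entry the factor $S_{a_2}(z)$ cancels against its inverse, leaving
$$\zeta\bigl\{[T_{a_2,b_2}(z,w)(z-w)+S_{b_2}(w)]\,T_{a_1,b_1}(z,w)+T_{a_2,b_2}(z,w)\,S_{b_1}(w)\bigr\}.$$
Since each $T_{a_i,b_i}(z,w)$ has a finite limit, the $(z-w)$-cross term vanishes as $z,w\to 0$, and the two surviving pieces collapse into $\zeta\bigl(\frac{\varphi(a_1)\varphi(b_1)-\varphi(a_1b_1)}{\varphi(a_1)\varphi(b_1)^{2}\varphi(b_2)}+\frac{\varphi(a_2)\varphi(b_2)-\varphi(a_2b_2)}{\varphi(a_2)\varphi(b_2)^{2}\varphi(b_1)}\bigr)$, which is the claimed off-diagonal entry. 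The only substantive step is the observation that the twist $M^{-1}\Gamma M$ preserves upper triangularity with the same diagonal $(z,w)$, since this is what allows the $S_X$ template to be reapplied directly; after that the computation is mechanical and uses no bi-free input beyond what is already packaged inside $T_{a,b}$, in contrast to Lemma 2.1 where the four-variable moment $\varphi(a_1a_2b_1b_2)$ had to be expanded via centring.
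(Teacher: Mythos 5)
Your proposal is correct and follows essentially the same route as the paper: the paper likewise computes $S_{X_2}(\Gamma)^{-1}\Gamma S_{X_2}(\Gamma)$ explicitly (obtaining an upper triangular matrix with the same diagonal $(z,w)$ and off-diagonal entry $\zeta\bigl((z-w)S_{a_2}(z)^{-1}\rho_2(z,w)+S_{a_2}(z)^{-1}S_{b_2}(w)\bigr)$), reapplies the $S_{X}$ formula from Lemma 2.1, multiplies back by $S_{X_2}(\Gamma)$, and passes to the limit of $\gamma(z,w)=\rho_1(z,w)\bigl((z-w)\rho_2(z,w)+S_{b_2}(w)\bigr)+\rho_2(z,w)S_{b_1}(w)$, where your $T_{a_i,b_i}$ is the paper's $\rho_i$. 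Your observations that the twist preserves upper triangularity with the same diagonal and that no bi-free input is needed beyond the single-pair limits of $T_{a_i,b_i}$ are exactly what the paper's computation implicitly relies on.
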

\begin{proof}
 Let $\zeta\rho_1(z,w)$ and $\zeta\rho_2(z,w)$ denote the $(1.2)$ entries of  $S_{X_1}(\Gamma)$ and $S_{X_2}(\Gamma)$,  respectively.  Since $(a_1, b_1)$ and $(a_2, b_2)$ are bi-free, we have
\begin{align*}
&S_{X_2}(\Gamma)S_{X_1}(S_{X_2}(\Gamma)^{-1}\Gamma S_{X_2}(\Gamma))=\left(\begin{matrix}S_{a_2}(z)&\zeta \rho_2(z,w)\\0&S_{b_2}(w)\end{matrix}\right)\\
\times &S_{X_1}\left(\left(\begin{matrix}S_{a_2}(z)^{-1}&-\zeta\rho_2(z,w)S_{a_2}(z)^{-1}S_{b_2}(w)^{-1}\\ 0&S_{b_2}(w)^{-1}\end{matrix}\right)\left(\begin{matrix}z&\zeta \\0&w\end{matrix}\right)\left(\begin{matrix}S_{a_2}(z)&\zeta \rho_2(z,w)\\0&S_{b_2}(w)\end{matrix}\right)\right)\\
=&\left(\begin{matrix}S_{a_2}(z)&\zeta \rho_2(z,w)\\0&S_{b_2}(w)\end{matrix}\right)\\
\times &S_{X_1}\left(\left(\begin{matrix}zS_{a_2}(z)^{-1}&\zeta S_{a_2}(z)^{-1}-\zeta w\rho_2(z,w)S_{a_2}(z)^{-1}S_{b_2}(w)^{-1}\\ 0&wS_{b_2}(w)^{-1}\end{matrix}\right)\left(\begin{matrix}S_{a_2}(z)&\zeta \rho_2(z,w)\\0&S_{b_2}(w)\end{matrix}\right)\right)\\
=&\left(\begin{matrix}S_{a_2}(z)&\zeta \rho_2(z,w)\\0&S_{b_2}(w)\end{matrix}\right)
S_{X_1}\left(\left(\begin{matrix}z&\zeta zS_{a_2}(z)^{-1}\rho_2(z,w)+\zeta S_{a_2}(z)^{-1}S_{b_2}(w)-\zeta w\rho_2(z,w)S_{a_2}(z)^{-1}\\ 0&w\end{matrix}\right)\right)\\
=&\left(\begin{matrix}S_{a_2}(z)&\zeta \rho_2(z,w)\\0&S_{b_2}(w)\end{matrix}\right)\left(\begin{matrix}S_{a_1}(z)&\zeta((z-w)S_{a_2}^{-1}(z)\rho_2(z,w)+S_{a_2}(z)^{-1}S_{b_2}(w))\rho_1(z,w)\\0&S_{b_1}(w)\end{matrix}\right)\\
=&\left(\begin{matrix}S_{a_2}(z)S_{a_1}(z)&\zeta\rho_1(z,w)((z-w)\rho_2(z,w)+S_{b_2}(w))+\zeta\rho_2(z,w)S_{b_1}(w)\\0&S_{b_2}(w)S_{b_1}(w)\end{matrix}\right).
\end{align*}
Let $\gamma(z,w)=\rho_1(z,w)((z-w)\rho_2(z,w)+S_{b_2}(w))+\rho_2(z,w)S_{b_1}(w)$. We then get
$$\lim_{z\rightarrow 0, w\rightarrow 0}\gamma(z,w)=\frac{\varphi(a_1)\varphi(b_1)-\varphi(a_1b_1)}{\varphi(a_1)\varphi(b_1)^2\varphi(b_2)}+\frac{\varphi(a_2)\varphi(b_2)-\varphi(a_2b_2)}{\varphi(a_2)\varphi(b_2)^2\varphi(b_1)}.$$
\end{proof}

\begin{Theorem} Suppose that $(a_1, b_1)$ and $(a_2, b_2)$ are bi-free in $(\A, \varphi)$, and $\varphi(a_j)\varphi(b_j)\ne 0$, for $j=1,2$. Then $(2.1)$ holds true if and only if either $S_{a_1, b_1}(z,w)\equiv 1$ or $S_{a_2, b_2}(z,w)\equiv1$ for $z$ and $w$ near zero.
\end{Theorem}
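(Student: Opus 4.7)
The plan is to reduce Dykema's equation (2.1), applied to $x=X_1$, $y=X_2$, to a scalar identity between the $(1,2)$-entries of its two sides, derive closed-form expressions for these entries in terms of the partial bi-free $S$-transforms $s(z,w):=S_{a_1,b_1}(z,w)$ and $t(z,w):=S_{a_2,b_2}(z,w)$, and then exhibit a factorisation of the resulting difference as a non-vanishing analytic multiple of $(1-s)(1-t)$. The diagonal entries on the two sides coincide automatically: since $\{a_1,a_2\}$ and $\{b_1,b_2\}$ are free pairs by bi-freeness, Voiculescu's classical multiplicative formula makes the $(1,1)$- and $(2,2)$-entries of both sides agree. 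Hence (2.1) is equivalent to $F(z,w)=G(z,w)$, where $\zeta F$ and $\zeta G$ denote the $(1,2)$-entries of the left- and right-hand sides of (2.1), respectively.

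To simplify $F$ and $G$, I first rearrange the definition of the partial $S$-transform into the identity
$$\Psi_{a,b}\bigl(\Psi_a^{\langle-1\rangle}(z),\Psi_b^{\langle-1\rangle}(w)\bigr)=\frac{zw(z+w+1)\,S_{a,b}(z,w)}{(z+1)(w+1)-zw\,S_{a,b}(z,w)},$$
and substitute it into the expression for $\rho(z,w)$ (the $(1,2)$-coefficient of $S_X$ with $X$ the diagonal matrix built from $(a,b)$) derived in the proof of Lemma 2.1; a short algebraic collapse gives
$$\rho(z,w)=\frac{S_b(w)\bigl(1-S_{a,b}(z,w)\bigr)}{zS_{a,b}(z,w)+w+1}.$$
Taking $(a,b)=(a_1a_2,b_1b_2)$ and using Voiculescu's multiplicative formula $S_{a_1a_2,b_1b_2}=st$ yields
$$F(z,w)=\frac{S_{b_1}(w)S_{b_2}(w)(1-st)}{zst+w+1},$$
while substituting $\rho_i(z,w)=S_{b_i}(w)(1-S_{a_i,b_i})/(zS_{a_i,b_i}+w+1)$ into the identity $\gamma(z,w)=\rho_1[(z-w)\rho_2+S_{b_2}]+\rho_2 S_{b_1}$ from the proof of Lemma 2.2 produces
$$G(z,w)=\frac{S_{b_1}(w)S_{b_2}(w)\bigl[(z+w)(1-st)+(2-s-t)\bigr]}{(zs+w+1)(zt+w+1)}.$$

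Putting $F-G$ over the common denominator $(zst+w+1)(zs+w+1)(zt+w+1)$ produces a polynomial numerator
$$N(s,t,z,w)=(1-st)(zs+w+1)(zt+w+1)-\bigl[(z+w)(1-st)+2-s-t\bigr](zst+w+1).$$
Direct substitution gives $N(1,t,z,w)\equiv 0$ and $N(s,1,z,w)\equiv 0$, so $N=(1-s)(1-t)\,Q(s,t,z,w)$ for some polynomial $Q$; setting $z=w=0$ yields $N(s,t,0,0)=-(1-s)(1-t)$, whence $Q(s,t,0,0)\equiv-1$. Consequently $Q(s(z,w),t(z,w),z,w)$ is a non-vanishing analytic function on a neighbourhood of $(0,0)$, and on that neighbourhood $F\equiv G$ is equivalent to $(1-s(z,w))(1-t(z,w))\equiv 0$. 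The integral-domain property of the local ring of holomorphic germs at the origin (the zero-property theorem invoked in Remark 1.3) then forces one of the two factors to vanish identically, giving $S_{a_1,b_1}\equiv 1$ or $S_{a_2,b_2}\equiv 1$. Conversely, if $s\equiv 1$ the formulas collapse to $F=G=S_{b_1}(w)S_{b_2}(w)(1-t)/(zt+w+1)$, and symmetrically if $t\equiv 1$; this verifies the other implication. The principal technical task is the algebraic simplification producing the compact forms for $F$ and $G$, together with the verification of the factorisation of $N$; once these are in hand the analytic conclusion follows at once.
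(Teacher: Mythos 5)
Your proof is correct and takes essentially the same route as the paper's: both reduce $(2.1)$ to equality of the $(1,2)$-entries, use the identity $\rho_{a,b}(z,w)=S_b(w)\bigl(1-S_{a,b}(z,w)\bigr)/\bigl(zS_{a,b}(z,w)+w+1\bigr)$ together with $S_{a_1a_2,b_1b_2}=S_{a_1,b_1}S_{a_2,b_2}$, and factor the resulting polynomial relation as $(1-S_1)(1-S_2)$ times a factor non-vanishing near the origin (your $Q$ is, up to sign, the paper's $(1+z)(1+w)-zwS_1S_2$), finishing with the identity theorem for holomorphic functions of two variables. The only cosmetic difference is that you obtain the factorisation by observing that $N$ vanishes at $s=1$ and at $t=1$ and then evaluating the cofactor at the origin, whereas the paper expands everything explicitly and checks non-vanishing of the cofactor via the limit of $\Psi_{a_1a_2,b_1b_2}/H_{a_1a_2,b_1b_2}$.
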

\begin{proof}
We use the symbols defined in Lemmas 2.1 and 2.2 $$\rho_{a,b}(z,w)=\frac{S_b(w)(1-S_{a.b}(z,w))}{zS_{a,b}(z,w)+w+1},\ S_j=S_{a_j,b_j}(z,w), j=1,2;$$
$$\rho_j=\rho_{a_j,b_j}(z,w), j=1,2, \ \gamma(z,w)=(z-w)\rho_1\rho_2+\rho_1S_{b_2}(w)+\rho_2S_{b_1}(w).$$ By the proofs of Lemmas 2.1 and 2.2, $(2.1)$ holds true if and only if $\rho_{a_1a_2, b_1b_2}(z,w)=\gamma(z,w)$ when $z$ and $w$ are near zero.

Without loss of generality, we suppose that $S_{a_1,b_1}(z,w)\equiv1$ when $z$ and $w$ are near zero.  Then

$$\rho_{a_1a_2, b_1b_2}(z,w)=S_{b_1}(w)\rho_2(z,w)=\gamma(z,w),$$
 when $z$ and $w$ are near zero.

Conversely, suppose that $\rho_{a_1a_2, b_1b_2}(z,w)=\gamma(z,w)$ when $z$ and $w$ are near zero. We then have
\begin{align*}
\frac{1-S_1S_2}{zS_1S_2+w+1}=&\frac{(z-w)(1-S_1)(1-S_2)}{(zS_1+w+1)(zS_2+w+1)}+\frac{1-S_1}{zS_1+w+1}+\frac{1-S_2}{zS_2+w+1}\\
=&\frac{(z-w)(1-S_1)(1-S_2)+(1-S_1)(zS_2+w+1)+(1-S_2)(zS_1+w+1)}{(zS_1+w+1)(zS_2+w+1)}\\
=&\frac{-(z+w)S_1S_2-S_1-S_2+z+w+2}{(zS_1+w+1)(zS_2+w+1)}.
\end{align*}
It implies that
\begin{align*}
&(1-S_1S_2)(zS_1+w+1)(zS_2+w+1)\\
=&(1-S_1S_2)(z^2S_1S_2+z(w+1)(S_1+S_2)+(w+1)^2)\\
=&-z^2(S_1S_2)^2-z(w+1)S_1^2S_2-z(w+1)S_1S_2^2+(z^2-w^2-2w-1)S_1S_2\\
+&z(w+1)S_1+z(w+1)S_2+(w+1)^2\\
=&(zS_1S_2+w+1)(-(z+w)S_1S_2-S_1-S_2+z+w+2)\\
=&-z(w+z)(S_1S_2)^2-zS_1^2S_2-zS_1S_2^2+(z^2+z-w^2-w)S_1S_2\\
+&(w+1)(z+w+2)-(w+1)S_1-(w+1)S_2.
\end{align*}
Therefore,
\begin{align*}
0=&-zw(S_1S_2)^2+zwS_1^2S_2+zwS_1S_2^2+(z+w+1)S_1S_2\\
-&(z+1)(w+1)(S_1+S_2)+(1+z)(1+w)\\
=&S_1S_2(z+w+1+zw(S_1+S_2)-zwS_1S_2)+(1+z)(1+w)(1-S_1-S_2)\\
=&S_1S_2((z+1)(w+1)-zw(1-S_1)(1-S_2))+(1+z)(1+w)(1-S_1-S_2)\\
=&(z+1)(w+1)\left(S_1S_2\left(1-\frac{zw}{(1+z)(1+w)}(1-S_1)(1-S_2)\right)+1-S_1-S_2\right).
\end{align*}
When $z$ and $w$ are very close to zero, we get
$$S_1S_2\left(1-\frac{zw}{(1+z)(1+w)}(1-S_1)(1-S_2)\right)+1-S_1-S_2=0.$$
 We can rewrite the above equation as
 $$(1-S_1)(1-S_2)\left(1-S_1S_2\frac{zw}{(1+z)(1+w)}\right)=0.$$
 On the other hand, by Voiculescu's multiplicative formula for bi-free partial $S$-transforms (1.1), we have
 \begin{align*}
 \lim_{z\rightarrow 0, w\rightarrow 0}S_1S_2\frac{zw}{(1+z)(1+w)}=&\lim_{z\rightarrow 0, w\rightarrow 0}S_{a_1a_2, b_1b_2}(z,w)\frac{zw}{(1+z)(1+w)}\\
 =&\lim_{z\rightarrow 0, w\rightarrow 0}\frac{\Psi_{a_1a_2,b_1b_2}(\Psi_{a_1a_2}^{\langle-1\rangle}(z),\Psi_{b_1b_2}^{\langle-1\rangle}(w) )}{H_{a_1a_2,b_1b_2}(\Psi_{a_1a_2}^{\langle-1\rangle}(z),\Psi_{b_1b_2}^{\langle-1\rangle}(w))}\\
 =&0.
 \end{align*}
 It follows that $$(1-S_1)(1-S_2)=0,$$ for $(z,w)\in D_r=\{(z,w):|z|<r, |w|<r\}$ for some $r>0$.  By Proposition 4.1 in \cite{DV3}, $S_1$ and $S_2$ are holomorphic functions of $(z,w)$ in a neighborhood of $(0,0)$. If $S_1(z,w)$ is not the constant function 1 in $D_r$, by Lemma 24 in \cite{PG}, $\{(z,w)\in D_r: 1-S_1(z,w)=0\}$ is a nowhere dense subset of $D_r$. It implies that there exists an open subset $U\subseteq D_r$ such that $1-S_(z,w)\ne 0$, for all $(z,w)\in U$. Therefore, $1-S_2(z,w)=0$, for all $(z,w)\in U$. By the uniqueness theorem of holomorphic functions of several complex variables (Theorem 1 in \cite{PG}), $S_2(z,w)=1$, for all $(z,w)\in D_r$.
\end{proof}

\begin{Remark}A pair $(a,b)$ of random variables in a non-commutative probability space  $(\A, \varphi)$ is said to {\sl have factoring two-band moments}, if $\varphi(a^mb^n)=\varphi(a^m)\varphi(b^n)$, for all $m,n=1, 2, \cdots$. By Remark 4.4 in \cite{PS1}, Proposition 4.2 in \cite{DV3}, or Remark 2.7 in \cite{PS2}, $(a,b)$ has factoring two-band moments, if and only if $S_{a,b}(z,w)\equiv 1$ when $z$ and $w$ are very close to the origin $0$ in $\mathbb{C}$. Therefore, we get the following corollary.
\end{Remark}

\begin{Corollary} If $(a_1, b_1)$ and $(a_2, b_2)$ are bi-free in $(\A, \varphi)$, and $$\varphi(a_jb_j)\ne \varphi(a_j)\varphi(b_j),\  \varphi(a_j)\ne 0\ne \varphi(b_j),$$ for $j=1,2$, then $$S_{X_1X_2}(\Gamma)\ne S_{X_2}S_{X_1}(S_{X_2}(\Gamma)^{-1}\Gamma S_{X_2}(\Gamma)),$$ in any $\varepsilon$-neighborhood $\{\Gamma: \|\Gamma\|<\varepsilon\}$ of matrix $0$.
\end{Corollary}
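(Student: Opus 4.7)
The plan is to deduce this as a direct contrapositive corollary of Theorem 2.3, using Remark 2.4 to translate the hypothesis about one-band moments into a statement about the partial bi-free $S$-transforms.

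First, I would observe that the assumption $\varphi(a_jb_j)\neq\varphi(a_j)\varphi(b_j)$ is precisely the failure of the factoring two-band moment identity at $m=n=1$. Hence, for each $j=1,2$, the pair $(a_j,b_j)$ does not have factoring two-band moments in the sense of Remark 2.4. By the characterization stated in Remark 2.4 (which rests on Remark 4.4 in \cite{PS1}, Proposition 4.2 in \cite{DV3}, or Remark 2.7 in \cite{PS2}), this is equivalent to saying that neither $S_{a_1,b_1}(z,w)$ nor $S_{a_2,b_2}(z,w)$ is identically equal to the constant function $1$ on any neighborhood of $(0,0)$ in $\mathbb{C}^2$.

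Next I would invoke Theorem 2.3 in contrapositive form. Theorem 2.3 asserts that $(2.1)$ holds for all invertible upper triangular $\Gamma$ of sufficiently small norm if and only if at least one of $S_{a_1,b_1}$, $S_{a_2,b_2}$ is identically $1$ near $(0,0)$. Combined with the preceding paragraph, this forces $(2.1)$ to fail somewhere in every neighborhood of $\Gamma=0$: for each $\varepsilon>0$, there must exist at least one invertible upper triangular $\Gamma$ with $\|\Gamma\|<\varepsilon$ at which
$$S_{X_1X_2}(\Gamma)\;\neq\; S_{X_2}(\Gamma)\,S_{X_1}\!\bigl(S_{X_2}(\Gamma)^{-1}\Gamma\,S_{X_2}(\Gamma)\bigr),$$
which is precisely the assertion of the corollary.

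Since the argument is a bookkeeping exercise that glues Theorem 2.3 to Remark 2.4, there is essentially no real obstacle to overcome. The only subtlety worth explicit mention is the interpretation of the phrase ``$(2.1)$ holds true'' in Theorem 2.3: it must be read in the operator-valued sense of Dykema, namely as holding for all (upper triangular invertible) $\Gamma$ in some norm-neighborhood of $0$. Its contrapositive then delivers exactly the existential ``in any $\varepsilon$-neighborhood'' formulation demanded by the corollary.
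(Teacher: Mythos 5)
Your proposal is correct and follows exactly the route the paper intends: the corollary is stated immediately after Remark~2.4 precisely as the contrapositive combination of that remark (failure of factoring at $m=n=1$ already rules out $S_{a_j,b_j}\equiv 1$ near the origin) with Theorem~2.3. Your explicit remark about reading ``$(2.1)$ holds true'' as a statement over a full norm-neighborhood of $\Gamma=0$, so that its negation is the existential claim in every $\varepsilon$-neighborhood, is the right (and only) point of care.
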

The above result shows that there are many  pairs $(a_1,b_1)$ and $(a_2,b_2)$ for which $S_{X_1}$ and $S_{X_2}$ do not satisfy $(2.1)$. We shall give such an example derived from $W^*$-free products of finite von Neumann algebras.
\begin{Example}[6.2 in \cite{DV1}]Let $(\A, \tau)$ be the $W^*$-free product of two type $II_1$ factors $\A_1$ and $\A_2$ with the faithful normal tracial states $\tau_1$ and $\tau_2$ on $\A_1$ and $\A_2$, respectively. Let $$\varphi: B(L^2(\A, \tau))\rightarrow \mathbb{C},\  \varphi(T)=\langle T1,1\rangle, \forall T\in B(L^2(\A, \tau)).$$ Let $L:\A\rightarrow B(L^2(\A, \tau))$, $R:\A^{op}\rightarrow B(L^2(\A,\tau))$ be the regular left and right presentations $$L(m)h=mh, \ R(m)h=hm, m\in \A, h\in L^2(\A, \tau),$$ and  $$L_j=L|_{\A_j}:\A_j\rightarrow B(L^2(\A,\tau)), \ R_j=R|_{\A_j^{op}}:\A_j^{op}\rightarrow B(L^2(\A,\tau)).$$ By 6.2 in \cite{DV1}, $(L_1(\A_1), R_1(\A_1^{op}))$ and $((L_2(\A_2), R_2(\A_2^{op})))$ are bi-free in $(B(L^2(\A,\tau)), \varphi)$.

Choose $x_j, y_j \in \A_j$ such that $\tau_j(x_jy_j)\ne \tau_j(x_j)\tau_j(y_j)$, for $j=1,2$. Let $a_j=L_j(x_j)$ and $b_j=R(y_j)$, for $j=1,2$. Then
\begin{align*}
\varphi(a_jb_j)=&\langle x_jy_j1,1\rangle=\tau(x_jy_j)=\tau_j(x_jy_j)\ne \tau_j(x_j)\tau_j(y_j)\\
=&\langle L_j(x_j)1,1\rangle\langle L_j(y_j)1,1\rangle=\varphi(a_j)\varphi(b_j),
 \end{align*}
 for $j=1,2$. It follows from Corollary 2.5 that $S_{X_1X_2}(\Gamma)\ne S_{X_2}(\Gamma)S_{X_1}(S_{X_2}(\Gamma)^{-1}\Gamma S_{X_2}(\Gamma))$, for $z, w$ in any neighborhood of zero.
\end{Example}

If  both $(a_1,b_1)$ and $(a_2,b_2)$ have factoring two-band moments, we can get  a subordination result for the $\Psi$-transforms of $X_1$, $X_2$, and  $X_1X_2$.
\begin{Theorem}If both $(a_1,b_1)$ and $(a_2,b_2)$ have factoring two-band moments, and $(a_1, b_1)$ and $(a_2, b_2)$ are bi-free in $(\A, \varphi)$,  $a_j, b_j$ are unitaries (or non-zero positive elements) in a $C^*$-probability space $(\A, \varphi)$, for $j=1,2$, then there are analytic functions $\omega_{a_i}, \omega_{b_i}:\mathbb{D}\rightarrow \mathbb{D}$ (or $\omega_{a_i}, \omega_{b_i}:\mathbb{C}\setminus \mathbb{R}^+\rightarrow \mathbb{C}\setminus \mathbb{R}^+$ ) such that $$\Psi_{X_1X_2}(\Gamma)=\Psi_{X_j}(\omega_j(\Gamma)),$$ whenever $|z|$ and $|w|$ are small enough, where $$\omega_j(\Gamma)=\left(\begin{matrix}\omega_{a_j}(z)&\frac{\zeta\omega_{b_j}(w)}{w}\\0&\omega_{b_j}(w)\end{matrix}\right),$$ $j=1,2$.
\end{Theorem}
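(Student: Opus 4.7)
The plan is to reduce the matrix-valued subordination to the scalar subordination for free multiplicative convolution, by exploiting that factoring two-band moments dramatically simplifies the formula for $\Psi_X$ obtained in the proof of Lemma 2.1.

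First I would verify the following factorization principle: if both $(a_1,b_1)$ and $(a_2,b_2)$ have factoring two-band moments, then so does $(a_1a_2,b_1b_2)$. This follows from the characterization recalled in Remark 2.4, namely $(a,b)$ has factoring two-band moments if and only if $S_{a,b}(z,w)\equiv 1$ near the origin, combined with Voiculescu's multiplicative identity $S_{a_1a_2,b_1b_2}(z,w)=S_{a_1,b_1}(z,w)S_{a_2,b_2}(z,w)$ of $(1.1)$. Thus both pairs have $S$-transform identically $1$, and therefore so does $(a_1a_2,b_1b_2)$. Equivalently, $\Psi_{a_1a_2,b_1b_2}(z,w)=\Psi_{a_1a_2}(z)\Psi_{b_1b_2}(w)$, and $\Psi_{a_j,b_j}(z,w)=\Psi_{a_j}(z)\Psi_{b_j}(w)$ for $j=1,2$.

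Next, since $(a_1,b_1)$ and $(a_2,b_2)$ are bi-free, the pairs $\{a_1,a_2\}$ and $\{b_1,b_2\}$ are each free (as noted at the end of the proof of Lemma 2.1). Because each $a_j$ and $b_j$ is either a unitary or a non-zero positive element in a $C^*$-probability space, the classical subordination theorem for free multiplicative convolution (Biane, cited in the paper via \cite{PB}) produces analytic self-maps $\omega_{a_j},\omega_{b_j}$ of $\mathbb{D}$ (or of $\mathbb{C}\setminus\mathbb{R}^+$) satisfying $\Psi_{a_1a_2}(z)=\Psi_{a_j}(\omega_{a_j}(z))$ and $\Psi_{b_1b_2}(w)=\Psi_{b_j}(\omega_{b_j}(w))$. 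These are exactly the subordination functions that will assemble into the matricial $\omega_j(\Gamma)$.

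The conclusion is then a direct computation. From the proof of Lemma 2.1, for any pair $(a,b)$,
\begin{equation*}
\Psi_X(\Gamma)=\begin{pmatrix}\Psi_a(z)&\dfrac{\zeta}{w}\bigl(\Psi_{a,b}(z,w)+\Psi_b(w)\bigr)\\ 0&\Psi_b(w)\end{pmatrix}.
\end{equation*}
When $(a,b)$ has factoring two-band moments this collapses to
\begin{equation*}
\Psi_X(\Gamma)=\begin{pmatrix}\Psi_a(z)&\dfrac{\zeta\Psi_b(w)}{w}\bigl(1+\Psi_a(z)\bigr)\\ 0&\Psi_b(w)\end{pmatrix}.
\end{equation*}
Applying this closed form to both $(a_1a_2,b_1b_2)$ (which we have just shown has factoring two-band moments) and to $(a_j,b_j)$ evaluated at $\omega_j(\Gamma)=\bigl(\begin{smallmatrix}\omega_{a_j}(z)&\zeta\omega_{b_j}(w)/w\\0&\omega_{b_j}(w)\end{smallmatrix}\bigr)$, the scalar subordination equalities $\Psi_{a_j}(\omega_{a_j}(z))=\Psi_{a_1a_2}(z)$ and $\Psi_{b_j}(\omega_{b_j}(w))=\Psi_{b_1b_2}(w)$ yield equality of every entry of $\Psi_{X_j}(\omega_j(\Gamma))$ with the corresponding entry of $\Psi_{X_1X_2}(\Gamma)$. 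Note that the somewhat peculiar choice of off-diagonal entry $\zeta\omega_{b_j}(w)/w$ in $\omega_j(\Gamma)$ is precisely what is needed so that the prefactor $\beta/\delta$ in the general formula for $\Psi_X$ produces $\zeta/w$ after substitution; this is the one step worth checking carefully, but it is a line of algebra. I do not anticipate a substantive obstacle: the entire argument is a bookkeeping consequence of step one (the factoring principle via $S$-transform multiplicativity) together with scalar subordination.
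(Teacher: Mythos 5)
Your proposal is correct and follows essentially the same route as the paper: deduce that $(a_1a_2,b_1b_2)$ has factoring two-band moments via $S_{a_1a_2,b_1b_2}=S_{a_1,b_1}S_{a_2,b_2}\equiv 1$ and the characterization in Remark 2.4, invoke scalar subordination for free multiplicative convolution to get $\omega_{a_j},\omega_{b_j}$, and substitute into the closed form of $\Psi_X(\Gamma)$ from the proof of Lemma 2.1, where the choice $\zeta\omega_{b_j}(w)/w$ makes the off-diagonal prefactor come out to $\zeta/w$. The only cosmetic difference is the citation for scalar subordination (the paper uses Belinschi--Bercovici here rather than Biane), which does not affect the argument.
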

\begin{proof}
By Remark 2.4, $S_{a_j,b_j}(z,w)=1, \forall z,w\in D_r, j=1,2$,   where $D_r$ is the open disk in $\mathbb{C}$ centered at $0$ with some radius $r>0$.
Therefore, $$S_{a_1a_2, b_2b_2}(z,w)=S_{a_1,b_1}(z,w)S_{a_2,b_2}(z,w)=1,$$ for $z,w$ near zero, since $(a_1, b_1)$ and $(a_2, b_2)$ are bi-free. By Remark 2.4 again,  $$\varphi((a_1a_2)^m(b_1b_2)^n)=\varphi((a_1a_2)^m)\varphi((b_1b_2)^n), m,n\in \mathbb{N}.$$
It implies that $$\Psi_{a_1a_2,b_1b_2}(z,w)=\Psi_{a_1a_2}(z)\Psi_{b_1b_2}(w), \ \Psi_{a_j,b_j}(z,w)=\Psi_{a_j}(z)\Psi_{b_j}(w), j=1, 2, \eqno (2.3)$$ whenever $|z|$ and $|w|$ are mall enough. By (2.3) and the proof of Lemma 2.1, we get $$\Psi_{X_1X_2}(\Gamma)=\left(\begin{matrix}\Psi_{a_1a_2}(z)&\frac{\zeta}{w}\Psi_{b_1b_2}(w)(\Psi_{a_1a_2}(z)+1)\\0&\Psi_{b_1b_2}(w)\end{matrix}\right),\ \Psi_{X_j}(\Gamma)
=\left(\begin{matrix}\Psi_{a_j}(z)&\frac{\zeta}{w}\Psi_{b_j}(w)(\Psi_{a_j}(z)+1)\\0&\Psi_{b_j}(w)\end{matrix}\right),$$ for $j=1, 2$, whenever $|z|$ and $|w|$ are mall enough.

Since $a_j, b_j$ are unitaries (or non-zero positive elements) in a $C^*$-probability space $(\A, \varphi)$, for $j=1,2$, by the subordination theorems for multiplicative free convolution (Theorems 3.2 and 3.3 in \cite{BB}),    there are analytic functions $\omega_{a_i}, \omega_{b_i}:\mathbb{D}\rightarrow \mathbb{D}$ (or $\omega_{a_i}, \omega_{b_i}:\mathbb{C}\setminus \mathbb{R}^+\rightarrow \mathbb{C}\setminus \mathbb{R}^+$ ) such that $$\Psi_{a_1a_2}(z)=\Psi_{a_i}(\omega_{a_i}(z)), \ \Psi_{b_1b_2}(z)=\Psi_{b_i}(\omega_{b_i}(z)),$$ for $z\in \mathbb{D}$ (or $z\in \mathbb{C}\setminus \mathbb{R}^+$),  and $i=1,2$. Therefore,
$$\Psi_{X_1X_2}(\Gamma)=\left(\begin{matrix}\Psi_{a_j}(\omega_{a_j}(z))&\frac{\zeta}{w}\Psi_{b_j}(\omega_{b_j}(w))(\Psi_{a_j}(\omega_{a_j}(z))+1)\\0&\Psi_{b_j}(\omega_{b_j}(w))\end{matrix}\right)=\Psi_{X_j}(\omega_j(\Gamma)),$$ for $j=1,2$.
\end{proof}
\begin{Remark} It is obvious that if $a$ and $b$ are freely independent, or tensorial independent, then $\varphi(a^mb^n)=\varphi(a^m)\varphi(b^n)$, for all $m,n\in \mathbb{N}$. But, conversely, the property of factoring two-band moments does not imply freely or tensorial independence of $a$ and $b$.
\end{Remark}


\begin{thebibliography}{99}

\bibitem[BB]{BB}S. T. Belinschi and H. Bercovici. {\sl A New Approach to Subordination Results in Free Probability}. J. D'Analyse Math., Vol. 101(2007), 357-365.
\bibitem[BBGS]{BBGS}S. T. Berlinschi, H. Bercovici, Y. Gu, and P. Skoufranis. {\sl Analytic Subordination for Bi-free convolution}. arXiv:1702.01673v1, [math.OA], 6 Feb., 2017.
\bibitem[BSTV]{BSTV} S. T. Belinschi, R. Speicher, J. Treilhard, and C. Vargas. {\sl Operator-valued Free Multiplicative convolution: Analytic Subordination theory and Applications to Random Matrix Theory}. IMRN Vol. 2015, Issue 14, 5933-5958, Jan. 2015.
\bibitem[PB]{PB}P. Biane. {\sl Processes with free increments}. Math. Z., 227(1998), 143-174.
\bibitem[CNS1]{CNS1}I. Charlesworth, B. Nelson, and P. Skoufranis. {\sl On Two-faced Families of non-commutative random variables}. Canad. J. Math., 26(2015), no.6, 1290-1325.
\bibitem[KD]{KD}K. Dykema. {\sl On the $S$-transform over a Banach algebra}. J. Funct. Anal., 231(2006), 90-110.
\bibitem[PG]{PG}P. M. Gauthier. {\sl Lectures on Several Complex Variables}. Springer International Publishing Swizerland 2014.
\bibitem[GHM]{GHM}Y. Gu, H. Huang, and J. Mingo. {\sl An analogue of the Levy-Hincin formula for bi-free infintely divisible distributions}.  Indiana Univer. Math. J. volume 65, issue 5, 2016, pp.  1795-1831.
\bibitem[HW]{HW} H.-W. Huang and J.-C. Wang. {\sl Harmonic Analysis for the Bi-free Partial $S$-transforms}. arXiv:1705.06569v1, [math.OA], 28, Apr., 2017.
\bibitem[NS]{NS}A. Nica and R. Speicher. {\sl Lectures on combinatorics for free probbaility}. London Math. Soc. Lecture Notes Series Vol. 335, Cambridge Univ. Press, 2006.
\bibitem[PS1]{PS1}P. Skoufranis. {\sl A Combinatorial Approach to Voiculescu’s Bi-Free Partial Transforms}. Pacific J. Math. 283 (2016), no. 2, 419-447.
\bibitem[PS2]{PS2}P. Skoufranis. {\sl A Combinatorial Approach to the Opposite Bi-Free Partial $S$-Transforms}.  arXiv:1705.02857v1 [math.OA]8 May, 2017.
\bibitem[DV1]{DV1}D. Voiculescu. {\sl Free probability of pairs of two faces I}. Comm. Math. Phys. 332(2014), 955-980.
\bibitem[DV2]{DV2}D. Voiculescu. {\sl Free probability of pairs of two faces II}. Ann. Inst. Henri Poincare Probab. Stat. 52(2016), No. 1, 1-15.
\bibitem[DV3]{DV3}D. Voiculescu. {\sl Free Probability for Pairs of Faces III: 2-variable Bi-free Partial $S$- and $T$- transforms}. J. Funct. Anal., 270(2016), No. 10, 3623-3638.
\bibitem[DV4]{DV4}D. Voiculescu. {\sl Multipliation of Certain Non-Commutative Random Variables}. J. Operator Th., 18(1987), 223-235.
\end{thebibliography}
\end{document}